\renewcommand{\@seccntformat}[1]{{\csname the#1\endcsname}.\hspace{.5em}}
\newtheorem{thm}{Theorem}[section]
\newtheorem{conj}[thm]{Conjecture}
\newtheorem{lem}[thm]{Lemma}
\renewcommand{\qed}{\hfill$\Box$\medskip}
\renewcommand{\thefootnote}{*}
\numberwithin{equation}{section}
\begin{document}

\begin{center}
{\large\bf Proof of a conjecture of Adamchuk}
\end{center}

\vskip 2mm \centerline{Guo-Shuai Mao}
\begin{center}
{\footnotesize $^1$Department of Mathematics, Nanjing
University of Information Science and Technology, Nanjing 210044,  People's Republic of China\\
{\tt maogsmath@163.com  } }
\end{center}


\vskip 0.7cm \noindent{\bf Abstract.}
In this paper, we prove a congruence which confirms a conjecture of Adamchuk. For any prime $p\equiv1\pmod3$ and $a\in\mathbb{Z}^{+}$, we have
\begin{align*}
\sum_{k=1}^{\frac{2}3(p^a-1)}\binom{2k}k\equiv0\pmod{p^2}.
\end{align*}

\vskip 3mm \noindent {\it Keywords}: Congruences; $p$-adic gamma function; hypergeometric functions.

\vskip 0.2cm \noindent{\it AMS Subject Classifications:} 11A07, 05A10, 11B65, 11G05, 33B15.

\renewcommand{\thefootnote}{**}

\section{Introduction}
In the past decades, many people studied congruences for sums of binomial coefficients (see, for instance, \cite{az-amm-2017,G,gl-arxiv-2019,mao-jnt-2017,mp-jmaa-2017,mw-ijnt-2019,mc-crmath-2019,st-aam-2010,st-ijnt-2011}). In 2011, Sun \cite{st-ijnt-2011} proved that for any odd prime $p$ and $a\in\mathbb{Z}^{+}$,
\begin{align}\label{st}
\sum_{k=0}^{p^a-1}\binom{2k}k\equiv\left(\frac{p^a}3\right)\pmod{p^2},
\end{align}
where $\left(\frac{\cdot}{\cdot}\right)$ is the Jacobi symbol.  Liu and Petrov \cite{lp-aam-2020} showed some congruences on sums of $q$-binomial coefficients.

In 2006, Adamchuk \cite{adamchuk-oeis-2006} conjectured that for any prime $p\equiv1\pmod3$,
\begin{align*}
\sum_{k=1}^{\frac{2}3(p-1)}\binom{2k}k\equiv0\pmod{p^2}.
\end{align*}
Pan and Sun \cite{ps-dm-2006} used a combinatorial identity to deduce that if $p$ is prime then
$$\sum_{k=0}^{p-1}\binom{2k}{k+d}\equiv\left(\frac{p-d}3\right)\pmod{p}\ \ \mbox{for}\ \  d=0,1,\ldots,p.$$
Sun told me he posed the following conjecture which generalizes Adamchuk's conjecture:
\begin{conj}\label{adamsun} Let $p$ be an odd prime and let $a\in\mathbb{Z}^{+}$. If $p\equiv1\pmod 3$ or $2|a$, then
\begin{align*}
\sum_{k=1}^{\frac{2}3(p^a-1)}\binom{2k}k\equiv0\pmod{p^2}.
\end{align*}
\end{conj}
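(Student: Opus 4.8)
The plan is to first reduce the statement to a ``tail'' congruence by invoking Sun's congruence \eqref{st}, and then to dispatch that tail by recasting it as a truncated hypergeometric series and evaluating it modulo $p^2$ with the $p$-adic gamma function.

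\emph{Reduction.} Since $p\equiv1\pmod3$ we have $\left(\frac{p^a}{3}\right)=\left(\frac p3\right)^a=1$, so \eqref{st} gives $\sum_{k=1}^{p^a-1}\binom{2k}k\equiv0\pmod{p^2}$. As $\tfrac13(p^a-1)$ and $\tfrac23(p^a-1)$ are nonnegative integers with sum $p^a-1$, proving the theorem is equivalent to proving $\sum_{k=\frac23(p^a-1)+1}^{p^a-1}\binom{2k}k\equiv0\pmod{p^2}$. Put $j=p^a-k$; using the elementary reversal identity $\binom{2(p^a-j)}{p^a-j}=\binom{2p^a}{p^a}\dfrac{(-p^a)_j}{4^{\,j}\,(\frac12-p^a)_j}$, where $(x)_j=x(x+1)\cdots(x+j-1)$, together with $\binom{2p^a}{p^a}\equiv2\pmod{p^{2}}$, the tail equals $2\sum_{j=1}^{(p^a-1)/3}\frac{(-p^a)_j}{4^{\,j}(\frac12-p^a)_j}$ modulo $p^2$, so it suffices to show
\begin{equation*}
\sum_{j=0}^{(p^a-1)/3}\frac{(-p^a)_j}{4^{\,j}\,(\tfrac12-p^a)_j}\equiv1\pmod{p^2},
\end{equation*}
that is, that the truncation of ${}_2F_1\!\left(-p^a,1;\tfrac12-p^a;\tfrac14\right)$ to $(p^a-1)/3$ terms is $\equiv1\pmod{p^2}$.

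\emph{The core case $a=1$.} A Kummer carry count gives $v_p\!\left(\frac{(-p^a)_j}{4^j(\frac12-p^a)_j}\right)=a-v_p(j)-v_p\binom{2j}j$, which is $\ge1$ throughout $1\le j\le\frac13(p^a-1)$; hence the congruence holds modulo $p$ trivially and all the content lies in the first ``$p$-correction''. When $a=1$, Wilson's theorem yields $\frac1p\cdot\frac{(-p)_j}{4^j(\frac12-p)_j}\equiv-\frac1{j\binom{2j}j}\pmod p$, so the whole problem collapses to the single congruence $\sum_{j=1}^{(p-1)/3}\frac1{j\binom{2j}j}\equiv0\pmod p$ for $p\equiv1\pmod3$. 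Via $\binom{2j}j\equiv(-4)^j\binom{(p-1)/2}{j}\pmod p$ this is a truncation of ${}_2F_1\!\left(1,1;\tfrac32;\tfrac14\right)$, whose full value is $\dfrac{\arcsin(1/2)}{\tfrac12\sqrt{3/4}}=\dfrac{2\pi}{3\sqrt3}$ --- the appearance of the angle $\pi/6$, equivalently of a primitive cube root of unity, is exactly where $p\equiv1\pmod3$ becomes decisive. I would evaluate this truncation by passing the half-integer parameters through the $p$-adic gamma function $\Gamma_p$, whose reflection and Gauss multiplication formulas tie $\Gamma_p(\tfrac13),\Gamma_p(\tfrac23)$ (legitimate since $3\mid p-1$) to a Jacobi sum with an explicit $p$-adic value; the first-order term is then controlled by the $p$-adic digamma $\psi_p=\Gamma_p'/\Gamma_p$ and by standard relations among $H_{(p-1)/2}$ and $H_{\lfloor p/3\rfloor}$.

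\emph{General $a$.} The simplest route is induction on $a$. Writing $k=qp+s$ with $0\le s<p$ and using a Lucas--Kummer congruence modulo $p^2$ --- $\binom{2k}k\equiv\binom{2q}q\binom{2s}s\bigl(1+p(\cdots)\bigr)\pmod{p^2}$ when $2s<p$, and $\binom{2k}k\equiv0\pmod p$ when $2s\ge p$ --- the sum $\sum_{k=1}^{\frac23(p^a-1)}\binom{2k}k$ factors, modulo $p^2$, as a sum of the same shape with exponent $a-1$ (handled by the inductive hypothesis and \eqref{st} at $a-1$) times the digit sum $\sum_{s=0}^{(p-1)/2}\binom{2s}s\equiv(-3)^{(p-1)/2}=1$, the cross terms being absorbed by the $a=1$ analysis. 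Alternatively one keeps the hypergeometric form uniformly in $a$: since $v_p(-p^a)=a$, expanding ${}_2F_1(-p^a,1;\tfrac12-p^a;\tfrac14)=1+p^a(\cdots)+O(p^{2a})$ reduces the claim to a careful bound on the ``derivative'' term, the only danger being the small factors in $(\tfrac12-p^a)_j$ on the long range $j\le\frac13(p^a-1)$, which are precisely the ones tracked by $\Gamma_p$.

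\emph{The main obstacle.} Everything is essentially formal modulo $p$ --- it rests on the single identity $(-3)^{(p-1)/2}\equiv1$ --- so the whole difficulty is the passage to modulo $p^2$: one must pin down the second $p$-adic digit of a truncated hypergeometric sum, and that digit genuinely encodes the arithmetic of $p$ at the argument $\tfrac13$. Marshalling the $p$-adic gamma function, its logarithmic derivative, the relevant Jacobi/Gauss sum, and the cube-root-of-unity structure so that they conspire to give exactly $0$ (rather than some nonzero Fermat-quotient-type expression) is the heart of the proof.
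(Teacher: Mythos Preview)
Your reduction is correct and coincides with the paper's opening moves: Sun's congruence \eqref{st} reduces the theorem to the tail, and the reversal $k\mapsto p^a-k$ converts that tail into
\[
p^{a-1}\sum_{k=1}^{(p^a-1)/3}\frac{1}{k\binom{2k}k}\equiv0\pmod p,
\]
which is exactly the paper's \eqref{p-13}. From here, however, your proposal does not contain a proof.

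The gap is that the truncation point $(p^a-1)/3$ is \emph{unnatural} for the series $\sum_k 1/(k\binom{2k}k)$: the $p$-adic gamma machinery packages Pochhammer symbols up to the index where they first pick up a factor of $p$ (here $(p^a-1)/2$), not to an arbitrary earlier index. Your arcsin remark is motivational but not operational; a ``direct $\Gamma_p$ evaluation'' of a series cut off at $(p-1)/3$ is not a well-defined procedure. The paper spends almost all of Section~3 on precisely this obstacle: it feeds $n=(p^a-1)/2$, $m=(p^a-1)/3$, $\lambda=-1/4$ into the Sury--Wang--Zhao identity for $\sum_k\lambda^k/\binom{n-1}{k}$ and, through a long chain of binomial rearrangements, converts the wrong-endpoint sum into $\sum_{j\ne m}\binom{n}{j}(-4)^j/(j-m)$. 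Only then does a Lucas step reduce matters to the auxiliary congruence
\[
\sum_{\substack{k=0\\k\ne(p-1)/3}}^{(p-1)/2}\frac{\binom{2k}{k}}{3k+1}\equiv0\pmod p
\]
(Theorem~\ref{Th3k1}), and it is \emph{that} sum --- now truncated at the natural point $(p-1)/2$ --- which is evaluated via hypergeometric transformations and $\Gamma_p$. Even there a non-obvious identity (Lemma~\ref{Lem2f1}, found with \texttt{Sigma}) is required. None of these intermediate devices are present in your plan.

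Your treatment of general $a$ is also unsupported. A Lucas--Kummer refinement modulo $p^2$ (Jacobsthal--Kazandzidis type) introduces Fermat-quotient correction terms, and the assertion that ``the cross terms are absorbed by the $a=1$ analysis'' is not justified. The paper sidesteps this entirely: it keeps $a$ general throughout the combinatorial identities and applies Lucas only modulo $p$ at the final step, so no $p^2$-level carry bookkeeping is ever needed.
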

Recall that the Bernoulli numbers $\{B_n\}$ and the Bernoulli polynomials $\{B_n(x)\}$ are defined as follows:
$$\frac x{e^x-1}=\sum_{n=0}^\infty B_n\frac{x^n}{n!}\ \ (0<|x|<2\pi)\ \mbox{and}\ B_n(x)=\sum_{k=0}^n\binom nkB_kx^{n-k}\ \ (n\in\mathbb{N}).$$
Mattarei and Tauraso \cite{mt-jnt-2013} proved that for any prime $p>3$, we have
$$\sum_{k=0}^{p-1}\binom{2k}k\equiv\left(\frac{p}3\right)-\frac{p^2}3B_{p-2}\left(\frac12\right)\pmod{p^2}.$$
The main objective of this paper is to obtain the following result.
\begin{thm}\label{Thadam}
Let $p$ be an odd prime and let $a\in\mathbb{Z}^{+}$. If $p\equiv1\pmod 3$ and $a\in\mathbb{Z}^{+}$, then
\begin{align*}
\sum_{k=1}^{\frac{2}3(p^a-1)}\binom{2k}k\equiv0\pmod{p^2}.
\end{align*}
\end{thm}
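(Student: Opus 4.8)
Since $\binom{0}{0}=1$ and $N:=\tfrac{2}{3}(p^{a}-1)$ is an integer (because $p\equiv1\pmod3$), the assertion is equivalent to
\[
\sum_{k=0}^{N}\binom{2k}{k}\equiv1\pmod{p^{2}}.
\]
By Sun's congruence \eqref{st} we also have $\sum_{k=0}^{p^{a}-1}\binom{2k}{k}\equiv\bigl(\tfrac{p^{a}}{3}\bigr)=1\pmod{p^{2}}$, so an equivalent goal is to show that the tail $\sum_{k=N+1}^{p^{a}-1}\binom{2k}{k}$ vanishes modulo $p^{2}$; I would keep both forms available. The key device is a generating-function identity that replaces a partial sum of central binomial coefficients by a single signed sum of ordinary ones: from $\binom{2k}{k}=[z^{0}]\bigl((1+z)^{2}/z\bigr)^{k}$ and summing the geometric series,
\[
\sum_{k=0}^{n}\binom{2k}{k}=[z^{n}]\frac{(1+z)^{2n+2}-z^{n+1}}{1+z+z^{2}}=\sum_{m\ge0}\binom{2n+2}{n-3m}-\sum_{m\ge0}\binom{2n+2}{n-1-3m},
\]
where the last step uses $1+z+z^{2}=(1-z^{3})/(1-z)$. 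The appearance of $1+z+z^{2}$, i.e. of the primitive cube roots of unity, is exactly where the hypothesis $p\equiv1\pmod3$ will be decisive.

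Specializing $n=N$ gives $2n+2=p^{a}+s$ with $s:=\tfrac{1}{3}(p^{a}+2)$ and $N=p^{a}-s$; complementing the binomial coefficients rewrites the left-hand side as $\sum_{l=0}^{N}\varepsilon_{l}\binom{p^{a}+s}{l}$, where $\varepsilon_{l}\in\{1,-1,0\}$ depends only on $l\bmod3$. So the problem reduces to evaluating $\binom{p^{a}+s}{l}$ modulo $p^{2}$ for $0\le l\le N$ and summing. Two observations organize this: first, $N>s$ (as $p^{a}>4$); second, for $s<l\le N$ the coefficient $\binom{p^{a}+s}{l}$ is divisible by $p$, since then both $l$ and $p^{a}+s-l$ lie strictly between $s$ and $p^{a}$ and so have digit $0$ in base-$p$ place $a$, whereas $p^{a}+s$ has digit $1$ there, forcing a carry (Kummer). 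Thus for those $l$ only $\binom{p^{a}+s}{l}/p$ modulo $p$ is needed, while for $l\le s$ one has $v_{p}\binom{p^{a}+s}{l}=v_{p}\binom{s}{l}$ and a $p$-adic unit can occur.

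The uniform tool for the evaluation is the $p$-adic gamma function $\Gamma_{p}$: write $\binom{p^{a}+s}{l}=(p^{a}+s)!/\bigl(l!\,(p^{a}+s-l)!\bigr)$, express each factorial through $\Gamma_{p}$ with the powers of $p$ extracted by Legendre's formula, and then use the reflection relation $\Gamma_{p}(x)\Gamma_{p}(1-x)=\pm1$ together with the first-order expansion $\Gamma_{p}(x+p^{a}t)\equiv\Gamma_{p}(x)\bigl(1+p^{a}t\,G_{1}(x)\bigr)$, where $G_{1}=\Gamma_{p}'/\Gamma_{p}$; this presents each $\binom{p^{a}+s}{l}$ as an explicit rational number times $1+p\cdot(\text{harmonic-type correction})$. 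The $p$-divisible coefficients are most easily handled through Vandermonde, $\binom{p^{a}+s}{l}=\sum_{i}\binom{p^{a}}{i}\binom{s}{l-i}$, using the exact formula $v_{p}\binom{p^{a}}{i}=a-v_{p}(i)$ for $i\ge1$, which reduces them to finite alternating sums evaluated by beta-type identities such as $\sum_{l}(-1)^{l}\binom{r}{l}/(l+s)=(s-1)!\,r!/(r+s)!$. Summing $\varepsilon_{l}\binom{p^{a}+s}{l}$ over $l$, the modulo-$p$ part collapses to $1$ (the mod-$p$ form of the statement), while the $O(p)$ corrections assemble into weighted sums of reciprocals over the residue classes modulo $3$; these vanish modulo $p$ precisely because $p\equiv1\pmod3$ makes the relevant classes symmetric — the same mechanism responsible for the term $-\tfrac{p^{2}}{3}B_{p-2}(\tfrac12)$ in the Mattarei–Tauraso formula quoted above. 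Assembling the contributions gives $\sum_{k=0}^{N}\binom{2k}{k}\equiv1\pmod{p^{2}}$.

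The step I expect to be the main obstacle is this resummation carried out uniformly in $a$. Tracking which of the coefficients $\binom{p^{a}+s}{l}$ are $p$-adic units and which are divisible by $p$, and to exactly what order, is delicate once $a\ge2$, because the Vandermonde corrections $\binom{p^{a}}{i}$ with $v_{p}(i)$ close to $a$ have unexpectedly low $p$-adic valuation, and because the digit pattern $N=(\underbrace{d,\dots,d}_{a})_{p}$ with $d=\tfrac{2}{3}(p-1)$ must be exploited carefully (it does at least give $v_{p}\binom{2N}{N}=a$ by Kummer, a useful auxiliary fact); moreover the cancellation of the first-order correction is an identity among weighted harmonic sums over residue classes modulo $3$ that genuinely uses $p\equiv1\pmod3$. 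I would expect that either an induction on $a$, or a reduction of the relevant $\Gamma_{p}$-quantities from exponent $a$ to exponent $1$ via their multiplicative behaviour, will be needed to push the argument through for all $a$.
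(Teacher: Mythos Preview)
This is a strategy outline, not a proof: you yourself flag the crucial resummation as ``the main obstacle'' and close with speculation about an induction on $a$ or a multiplicative reduction without carrying either out. The generating-function identity $\sum_{k\le n}\binom{2k}{k}=[z^n]\dfrac{(1+z)^{2n+2}-z^{n+1}}{1+z+z^2}$ and the recasting as a signed sum of $\binom{p^a+s}{l}$ are correct and interesting, but from that point on nothing is verified. The claim that ``the modulo-$p$ part collapses to $1$'' is asserted, not shown; the vanishing of the order-$p$ corrections is attributed to unspecified ``symmetry'' of residue classes and to an unstated identity among weighted harmonic sums. The Vandermonde/$\Gamma_p$ bookkeeping you describe is plausible in principle, but the delicate cases you already anticipate (terms with $v_p(i)$ near $a$) are precisely where such calculations can go wrong, and none of that analysis is done.

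The paper's route is substantially different and sidesteps exactly this obstacle. After the same first step (pass to the tail via \eqref{st}), it uses the reflection $\binom{2k}{k}\equiv -\dfrac{2p^a}{l\binom{2l}{l}}\pmod{p^2}$ with $k+l=p^a$ to turn the problem into
\[
p^{\,a-1}\sum_{k=1}^{(p^a-1)/3}\frac{1}{k\binom{2k}{k}}\equiv0\pmod p,
\]
which is now only a mod-$p$ statement. A chain of elementary identities (the Sury--Wang--Zhao formula for $\sum\lambda^k/\binom{n-1}{k}$, beta-integral evaluations, and Lucas' theorem) dissolves the $a$-dependence and reduces everything to a single mod-$p$ congruence for the base prime, Theorem~\ref{Th3k1}:
\[
\sum_{\substack{k=0\\k\ne(p-1)/3}}^{(p-1)/2}\frac{\binom{2k}{k}}{3k+1}\equiv0\pmod p.
\]
That auxiliary result is then established via truncated ${}_2F_1$ transformations and $\Gamma_p$-identities. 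Thus the paper concentrates all the analytic work in a mod-$p$ hypergeometric identity at level $a=1$, whereas your plan carries full $p^2$-accuracy throughout and must control harmonic corrections uniformly in $a$. Until that control is actually established, the proposal is incomplete.
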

In order to prove Theorem \ref{Thadam}, we fist show the following interesting congruence.
\begin{thm}\label{Th3k1} For any prime $p\equiv1\pmod 3$, we have
\begin{align*}
\sum_{\substack{k=0\\k\neq(p-1)/3}}^{(p-1)/2}\frac{\binom{2k}k}{3k+1}\equiv0\pmod{p}.
\end{align*}
\end{thm}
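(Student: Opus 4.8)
The plan is to identify the sum with a truncated Gauss hypergeometric series and to evaluate it modulo $p$ via a quadratic hypergeometric transformation and the $p$-adic Gamma function $\Gamma_p$. Concretely, using $\binom{2k}k=4^k(1/2)_k/k!$ and $1/(3k+1)=(1/3)_k/(4/3)_k$, for $0\le k\le\frac{p-1}2$ with $k\ne\frac{p-1}3$ (the excluded index being exactly the one for which $p\mid(4/3)_k$) one has $\binom{2k}k/(3k+1)=(1/2)_k(1/3)_k\,4^k/((4/3)_k\,k!)$. Since $\tfrac12\equiv-\tfrac{p-1}2\pmod p$, the symbol $(1/2)_k$ agrees modulo $p$ with $(-\tfrac{p-1}2)_k$, which vanishes for $k>\frac{p-1}2$; hence modulo $p$ the sum in question is the terminating series ${}_2F_1\big(-\tfrac{p-1}2,\tfrac13;\tfrac43;4\big)$ with the single offending term $k=\frac{p-1}3$ struck out.

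The crucial structural feature is the relation $c=a+b+\tfrac12$ for $(a,b,c)=(\tfrac12,\tfrac13,\tfrac43)$, which brings the quadratic transformation ${}_2F_1(a,b;a+b+\tfrac12;4w(1-w))={}_2F_1(2a,2b;a+b+\tfrac12;w)$ into play. Taking $4w(1-w)=4$ forces $w^2-w+1=0$, i.e.\ $w$ is a primitive sixth root of unity, and such a $w$ lies in $\mathbb Z_p$ precisely because $p\equiv1\pmod3$ makes $-3$ a quadratic residue mod $p$. The transformed series is ${}_2F_1(1,\tfrac23;\tfrac43;w)$, and for the string $(1,\tfrac23;\tfrac43)$ the Euler integral (after the substitution $s=t/(1-t)$) gives the closed value $\Gamma(1/3)^3(1-w)^{-2/3}/(2\sqrt3\,\pi)$ over $\mathbb C$; the $p$-adic counterpart is a product of $\Gamma_p$-values at $\tfrac13,\tfrac23$ times a root of unity, which one simplifies with the reflection formula $\Gamma_p(\tfrac13)\Gamma_p(\tfrac23)=-1$ and the $\Gamma_p$ multiplication formula. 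I would finish by inserting the elementary congruence $\Gamma_p(\tfrac13)^3\equiv-\binom{2(p-1)/3}{(p-1)/3}\pmod p$ (a Wilson-type computation) together with the correction term that reinstates the struck-out index $k=\frac{p-1}3$ (which carries the compensating $1/p$), and by checking that the total is divisible by $p$.

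The step I expect to be the real obstacle is the $p$-adic valuation bookkeeping around the singular index. The term $k=\frac{p-1}3$ has denominator exactly divisible by $p$ and must be isolated by hand; moreover the quadratic transformation is an identity of rational numbers (or of convergent series), whereas here the defining relation $c=a+b+\tfrac12$ holds only modulo $p$, so applying it introduces error terms that have to be controlled---or, more safely, the entire computation should be recast in terms of Gaussian hypergeometric functions over $\mathbb F_p$ (where the coincidence $-\tfrac12\leftrightarrow\tfrac{p-1}2$ and the quadratic character are built in), with the resulting Jacobi sums evaluated by the Gross--Koblitz formula. Showing that after all this the principal parts at the various singular indices cancel and leave a $p$-adic integer congruent to $0$ is the heart of the matter.
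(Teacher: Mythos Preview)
Your plan identifies the right hypergeometric structure, but the central step does not go through as stated. The quadratic transformation ${}_2F_1(a,b;a+b+\tfrac12;4w(1-w))={}_2F_1(2a,2b;a+b+\tfrac12;w)$ requires the bottom parameter to equal $a+b+\tfrac12$ \emph{exactly}. With $(a,b)=(\tfrac12,\tfrac13)$ this is satisfied, but then neither side terminates and the series has no $p$-adic meaning at $z=4$; with $a=-\tfrac{p-1}{2}$ the left side terminates, but the required bottom parameter becomes $\tfrac43-\tfrac p2$, not $\tfrac43$, so you cannot feed in the sum you actually want. You acknowledge this, yet you do not supply the missing ingredient: a concrete deformation of the bottom parameter by a multiple of $p$ together with a computation of how the ratio $(1/3)_k/(c)_k$ changes across the singular index $k=\tfrac{p-1}{3}$. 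That change is not mere bookkeeping---it produces a second sum over the tail $\tfrac{p+2}{3}\le k\le\tfrac{p-1}{2}$ which must itself be evaluated, and your sketch gives no mechanism for this. The fallback to finite-field hypergeometrics and Gross--Koblitz is plausible in spirit, but it would require the $\mathbb F_p$-analogue of precisely this quadratic transformation and the relevant Jacobi-sum evaluations, none of which you provide. As it stands the proposal is an outline with its hardest step left open.

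For comparison, the paper also deforms parameters by multiples of $p$---taking $\tfrac12\to\tfrac{1-p}{2}$, $\tfrac13\to\tfrac13-\tfrac p6$, $\tfrac43\to\tfrac43-\tfrac{2p}{3}$---so that the terminating ${}_2F_1$ has $p$-integral coefficients, and it isolates both the singular term at $k=\tfrac{p-1}{3}$ and the tail correction explicitly as separate quantities $\mathfrak F$ and $\mathfrak L-\mathfrak Q$. The deformed series $\mathfrak A$ is then evaluated not by the standard quadratic transformation but by a chain consisting of a Pfaff transformation, a bespoke identity (Lemma~\ref{Lem2f1}, discovered and certified via \texttt{Sigma}) that functions as a cubic-type step sending argument $\tfrac43$ to $1$, and two classical Gauss-type evaluations. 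The tail sum is handled by an independent hypergeometric computation. Everything is expressed through $\Gamma_p$ and reduces to the elementary factorization $(2^{p-1}-(-3)^{(p-1)/2})(1-(-3)^{-(p-1)/2})\equiv0\pmod{p^2}$. The moral is that a transformation-based route does succeed, but only after one locates an identity that respects the exact (not merely mod-$p$) parameter constraints; your proposal has not yet identified such an identity.
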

We shall prove Theorem \ref{Th3k1} in Section 2, Section 3 is devoted to prove Theorem \ref{Thadam}.
\section{Proof of Theorem \ref{Th3k1}}
Define the {\it hypergeometric series}
\begin{equation}\label{hypergeometricseries}
{}_{m+1}F_m\bigg[\begin{matrix}
\alpha_0&\alpha_1&\ldots&\alpha_m\\
&\beta_1&\ldots&\beta_m
\end{matrix}\bigg|\,z\bigg]:=\sum_{k=0}^{\infty}\frac{(\alpha_0)_k(\alpha_1)_k\cdots(\alpha_m)_k}{(\beta_1)_k\cdots(\beta_m)_k}\cdot\frac{z^k}{k!},
\end{equation}
where $\alpha_0,\ldots,\alpha_m,\beta_1,\ldots,\beta_m,z\in\mathbb{C}$ and
$$
(\alpha)_k=\begin{cases}\alpha(\alpha+1)\cdots(\alpha+k-1),&\text{if }k\geq 1,\\
1,&\text{if }k=0.\end{cases}
$$
For a prime $p$, let  $\mathbb{Z}_p$ denote the ring of all $p$-adic integers and let $$\mathbb{Z}_p^{\times}:=\{a\in\mathbb{Z}_p:\,a\text{ is prime to }p\}.$$
For each $\alpha\in\mathbb{Z}_p$, define the $p$-adic order $\nu_p(\alpha):=\max\{n\in\mathbb{N}:\, p^n\mid \alpha\}$ and the $p$-adic norm $|\alpha|_p:=p^{-\nu_p(\alpha)}$. Define the $p$-adic gamma function $\Gamma_p(\cdot)$ by
$$
\Gamma_p(n)=(-1)^n\prod_{\substack{1\leq j<n\\ (k,p)=1}}k,\qquad n=1,2,3,\ldots,
$$
and
$$
\Gamma_p(\alpha)=\lim_{\substack{|\alpha-n|_p\to 0\\ n\in\mathbb{N}}}\Gamma_p(n),\qquad \alpha\in\mathbb{Z}_p.
$$
In particular, we set $\Gamma_p(0)=1$. Throughout the whole paper, we only need to use the most basic properties of $\Gamma_p$, and all of them can be found in \cite{Murty02,Robert00}.
For example, we know that
\begin{equation}\label{gamma}
\frac{\Gamma_p(x+1)}{\Gamma_p(x)}=\begin{cases}-x,&\text{if }|x|_p=1,\\
-1,&\text{if }|x|_p>1.
\end{cases}
\end{equation}
\begin{lem}\label{Lem2f1} For any nonnegative integer $n$, we have
\begin{align}\label{2f1}
{}_{2}F_1\bigg[\begin{matrix}
-3n&-3n+\frac12\\
&-4n+\frac23
\end{matrix}\bigg|\,\frac43\bigg]=\frac1{4^n}{}_{2}F_1\bigg[\begin{matrix}
-n&-n+\frac12\\
&-2n+\frac56
\end{matrix}\bigg|\,1\bigg].
\end{align}
\end{lem}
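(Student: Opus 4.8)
The plan is to show that both sides of \eqref{2f1} equal the single rational quantity $(\tfrac13-n)_n\big/\bigl(4^{\,n}(\tfrac56-2n)_n\bigr)$, and then equate them.

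First I would evaluate the right-hand side. Its ${}_2F_1$ is terminating (upper parameter $-n$) and is evaluated at $z=1$, so the Chu--Vandermonde summation ${}_2F_1(-n,b;c;1)=(c-b)_n/(c)_n$ applies with $b=-n+\tfrac12$ and $c=-2n+\tfrac56$; there is no degeneracy since $(c)_n=(\tfrac56-2n)_n$ is a product of numbers of the form $(\text{integer})+\tfrac56$, hence nonzero. This yields
\[
{}_2F_1\!\left(-n,\,-n+\tfrac12;\,-2n+\tfrac56;\,1\right)=\frac{(\tfrac13-n)_n}{(\tfrac56-2n)_n},
\]
so the lemma is equivalent to
\[
{}_2F_1\!\left(-3n,\,-3n+\tfrac12;\,-4n+\tfrac23;\,\tfrac43\right)=\frac{(\tfrac13-n)_n}{4^{\,n}(\tfrac56-2n)_n}.
\]

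For the left-hand side I would invoke a cubic transformation of Goursat type. Writing $a=-3n$, the upper parameters are $a,\,a+\tfrac12$ and the lower parameter is $\tfrac43a+\tfrac23$: this is precisely the configuration for which such a transformation exists, and $z=\tfrac43$ is the distinguished argument at which its algebraic prefactor degenerates to a constant. In the normalization
\[
{}_2F_1\!\left(a,\,a+\tfrac12;\,\tfrac43a+\tfrac23;\,\tfrac43\right)=4^{\,a/3}\;{}_2F_1\!\left(\tfrac a3,\,\tfrac a3+\tfrac12;\,\tfrac23a+\tfrac56;\,1\right),
\]
the specialization $a=-3n$ (so $4^{\,a/3}=4^{-n}$), combined with the Chu--Vandermonde evaluation above, is exactly the claimed identity. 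So modulo pinning down this transformation the lemma is immediate.

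The step I expect to be the main obstacle is exactly that one: producing and justifying the cubic transformation in this precise normalization --- recognizing $\tfrac43$ as the right special value and checking that the usual $(1-\cdots)^{-3a}$-type prefactor collapses to $4^{\,a/3}$ there. Should no ready-made transformation be available in this shape, the fallback is to prove the displayed product formula for $S(n):={}_2F_1(-3n,-3n+\tfrac12;-4n+\tfrac23;\tfrac43)$ directly by induction on $n$: one checks $S(0)=1$ and then establishes the first-order recurrence
\[
S(n)=\frac{(\tfrac13-n)(\tfrac56-n)}{4\,(\tfrac56-2n)(\tfrac{11}6-2n)}\,S(n-1)
\]
(call this rational coefficient $c(n)$; it is the ratio of consecutive values of $(\tfrac13-n)_n/(4^{\,n}(\tfrac56-2n)_n)$). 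The recurrence is verified by creative telescoping: for the summand $t(n,k)$ of $S(n)$ one exhibits $G(n,k)$ with $t(n,k)-c(n)\,t(n-1,k)=G(n,k+1)-G(n,k)$ and $G$ vanishing at the endpoints of summation, and sums over $k$. Either route reduces the lemma to a finite check; the delicate point is that a terminating ${}_2F_1$ at the non-classical point $z=\tfrac43$ has no product-form sum in general, so one must exploit the special arithmetic of this particular argument.
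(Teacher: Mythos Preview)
Your proposal is correct, and your fallback route is essentially what the paper does. The paper's entire proof is: use the \texttt{Sigma} package to verify that both sides satisfy the first-order recurrence $(3n+2)(6n+1)S(n)=2(12n+1)(12n+7)S(n+1)$, then check agreement at $n=0,1,2$. After shifting $n\mapsto n-1$ and clearing denominators, this is exactly your coefficient $c(n)=\dfrac{(\tfrac13-n)(\tfrac56-n)}{4(\tfrac56-2n)(\tfrac{11}{6}-2n)}$. The difference is that you first evaluate the right-hand side in closed form via Chu--Vandermonde (a clean simplification the paper does not make), so you only need the recurrence for the left-hand side; the paper instead verifies the recurrence for both sides by computer algebra without ever writing down the product formula.

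Your primary approach through a cubic Goursat-type transformation at the special point $z=\tfrac43$ would be a genuinely different and more conceptual argument, but as you rightly flag, producing that transformation in the stated normalization is the whole content; the paper does not attempt this and goes straight to the recurrence. If you can locate the transformation in the literature it would make for a cleaner proof; otherwise your creative-telescoping fallback is fully adequate and matches the paper.
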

\begin{proof} By using package \texttt{Sigma} due to Schneider \cite{S}, we find that both sides of (\ref{2f1}) satisfy the same recurrence:
$$(3n+2)(6n+1)S[n]-2(12n+1)(12n+7)S(n+1)=0,$$
and it is easy to check that both sides of (\ref{2f1}) are equal for $n=0,1,2$.
\end{proof}
\begin{lem}\label{L} {\rm (\cite{L})}. For any prime $p>3$, we have the following congruences modulo $p$
\begin{align*}H_{\lfloor p/2\rfloor}\equiv-2q_p(2),\ H_{\lfloor p/3\rfloor}\equiv-\frac32q_p(3),\ H_{\lfloor p/6\rfloor}\equiv-2q_p(2)-\frac 32q_p(3).
\end{align*}
\end{lem}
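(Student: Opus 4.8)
\medskip
\noindent{\it Sketch of a direct proof of Lemma~\ref{L}.}
The plan I would follow is a single Bernoulli--polynomial computation that treats the three cases at once. First I would reduce a truncated harmonic sum to a Bernoulli value: for $1\le n\le p-1$ each $1/k$ with $1\le k\le n$ is a $p$-adic unit, so $1/k\equiv k^{p-2}\pmod p$, and the classical power-sum formula gives
$$H_n\equiv\sum_{k=1}^{n}k^{p-2}=\frac{B_{p-1}(n+1)-B_{p-1}}{p-1}\equiv B_{p-1}-B_{p-1}(n+1)\pmod p.$$
This is to be read as a congruence of $p$-integral rationals: by the von Staudt--Clausen theorem only $B_{p-1}$ among $B_0,\dots,B_{p-1}$ fails to be $p$-integral, while $B_{p-1}(x)-B_{p-1}$ has $p$-integral coefficients, so $B_{p-1}(n+1)-B_{p-1}$ is $p$-integral and depends on $n+1$ only modulo $p$. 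Since $\lfloor p/m\rfloor+1\equiv 1/m$ or $(m-1)/m\pmod p$ and $B_{p-1}(1-x)=B_{p-1}(x)$, this reduces Lemma~\ref{L} to computing $B_{p-1}-B_{p-1}(1/m)$ modulo $p$ for $m=2,3,6$.

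Next I would pin down $B_{p-1}(1/m)$ in closed form from the multiplication formula $B_{p-1}(mx)=m^{p-2}\sum_{r=0}^{m-1}B_{p-1}(x+r/m)$ with $x=0$; pairing terms via $B_{p-1}(1-r/m)=B_{p-1}(r/m)$ and solving the resulting linear relations gives
$$B_{p-1}(1/2)=(2^{2-p}-1)B_{p-1},\qquad B_{p-1}(1/3)=\tfrac12(3^{2-p}-1)B_{p-1},$$
and, after substituting these two into the $m=6$ instance,
$$B_{p-1}(1/6)=\tfrac12\big(6^{2-p}-3^{2-p}-2^{2-p}+1\big)B_{p-1}.$$

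To finish I would substitute, expand $a^{2-p}=a/(1+p\,q_p(a))\equiv a-p\,a\,q_p(a)\pmod{p^2}$, and use the von Staudt--Clausen consequence $p\,B_{p-1}\equiv-1\pmod p$, together with $q_p(6)\equiv q_p(2)+q_p(3)\pmod p$ for the third congruence. For instance $B_{p-1}-B_{p-1}(1/2)=(2-2^{2-p})B_{p-1}=2q_p(2)\cdot\dfrac{p\,B_{p-1}}{1+p\,q_p(2)}\equiv-2q_p(2)\pmod p$, which is the first assertion; the cases $m=3$ and $m=6$ are formally the same and give $-\frac32 q_p(3)$ and $-2q_p(2)-\frac32 q_p(3)$ (for $m=6$ one uses that $1+3^{2-p}+2^{2-p}-6^{2-p}$ is divisible by $p$ with quotient $\equiv 6q_p(6)-2q_p(2)-3q_p(3)\equiv 4q_p(2)+3q_p(3)\pmod p$). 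A fully elementary variant avoids Bernoulli numbers: from $\binom pj/p\equiv(-1)^{j-1}/j\pmod p$ one obtains $\sum_{j=1}^{p-1}t^{j}/j\equiv-\big((1-t)^p+t^p-1\big)/p\pmod p$, and then isolates the multiples of $m$ in $\{1,\dots,p-1\}$ by $m$-th roots of unity (the case $m=2$, i.e.\ $t=-1$, is immediate, while for $m=3,6$ one works in the unramified quadratic extension of $\mathbb{Z}_p$ when $p\not\equiv1\pmod3$ and exploits $(1-\zeta_3)(1-\zeta_3^2)=3$, $(1-\zeta_3)^2=-3\zeta_3$). In either route the one genuinely delicate point, which I expect to be the main obstacle, is the $p$-adic bookkeeping forced by the pole of $B_{p-1}$ (respectively by $\zeta_3\notin\mathbb{F}_p$): the intermediate identities have to be carried modulo $p^2$ before one divides them by $p$.
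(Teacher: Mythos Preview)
Your proof sketch is correct and self-contained. The paper itself, however, does not prove Lemma~\ref{L} at all: the lemma is stated with an explicit citation to Lehmer's 1938 paper~\cite{L} and is then used as a black box in the proof of Theorem~\ref{Th3k1}. So there is no ``paper's own proof'' to compare against.

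As to the substance of your argument: the Bernoulli-polynomial route you outline is essentially the classical one. The reduction $H_n\equiv B_{p-1}-B_{p-1}(n+1)\pmod p$ via the power-sum formula, combined with the multiplication formula $B_{p-1}(mx)=m^{p-2}\sum_{r=0}^{m-1}B_{p-1}(x+r/m)$ at $x=0$ and the reflection $B_{p-1}(1-x)=B_{p-1}(x)$, cleanly produces the closed forms for $B_{p-1}(1/m)$ with $m\in\{2,3,6\}$; the final evaluation using the von Staudt--Clausen consequence $pB_{p-1}\equiv-1\pmod p$ is correct, and your computation $6q_p(6)-2q_p(2)-3q_p(3)\equiv 4q_p(2)+3q_p(3)\pmod p$ for the $m=6$ case checks out. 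Your handling of the $p$-integrality issue---that $B_{p-1}(x)-B_{p-1}$ has $p$-integral coefficients even though $B_{p-1}$ itself does not, so that the congruence involving $B_{p-1}(1/m)$ is meaningful---is exactly the point that needs care here, and you have identified it correctly. The alternative root-of-unity filter you sketch at the end is also valid and is closer in spirit to Lehmer's original arguments; the Bernoulli approach is cleaner for treating all three cases uniformly.
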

\noindent{\it Proof of Theorem \ref{Th3k1}}.  First for any $\alpha, s\in\mathbb{Z}_p$, we have
$$\frac{\binom{2k}k}{4^k}=\frac{\left(\frac12\right)_k}{(1)_k},\ \  \ \frac{\left(\frac13\right)_k}{\left(\frac43\right)_k}=\frac1{3k+1}\ \ \mbox{and}\ \ (\alpha+sp)_k\equiv(\alpha)_k\pmod p.$$
For each $(p+2)/3\leq k\leq(p-1)/2$, we have
\begin{align*}
\frac{\left(\frac13-\frac{p}6\right)_k}{\left(\frac43-\frac{2p}3\right)_k}&=\frac{\frac{p}6\left(\frac13-\frac{p}6\right)_{(p-1)/3}\left(\frac{p}6+1\right)_{k-(p+2)/3}}{\frac{-p}3\left(\frac43-\frac{2p}3\right)_{(p-4)/3}\left(-\frac{p}3+1\right)_{k-(p-1)/3}}\equiv-\frac12\frac{\left(\frac13\right)_{(p-1)/3}(1)_{k-(p+2)/3}}{\left(\frac43\right)_{(p-4)/3}(1)_{k-(p-1)/3}}\\
&=-\frac12\frac{\left(\frac13\right)_{(p-1)/3}}{\left(\frac43\right)_{(p-4)/3}}\frac1{k-(p-1)/3}\equiv-\frac32\frac{\left(\frac13\right)_{(p-1)/3}}{\left(\frac43\right)_{(p-4)/3}}\frac1{3k+1}\pmod p.
\end{align*}
And
$$
\frac{\left(\frac13\right)_{(p-1)/3}}{\left(\frac43\right)_{(p-4)/3}}=\frac{p-1}3\frac{\left(\frac13\right)_{(p-1)/3}{(p-4)/3!}}{\left(\frac43\right)_{(p-4)/3}{(p-1)/3!}}\equiv-\frac13(-1)^{(p-1)/3}(-1)^{(p-4)/3}=\frac13\pmod p.
$$
Hence for each $(p+2)/3\leq k\leq(p-1)/2$,
$$
\frac{\left(\frac13-\frac{p}6\right)_k}{\left(\frac43-\frac{2p}3\right)_k}\equiv-\frac12\frac1{3k+1}\pmod p.
$$
That means that
$$
\sum_{k=(p+2)/3}^{(p-1)/2}\frac{\left(\frac{1-p}2\right)_k\left(\frac13-\frac{p}6\right)_k}{(1)_k\left(\frac43-\frac{2p}3\right)_k}4^k\equiv-\frac12\sum_{k=(p+2)/3}^{(p-1)/2}\frac{\left(\frac{1}2\right)_k\left(\frac13\right)_k}{(1)_k\left(\frac43\right)_k}4^k\pmod{p}.
$$
So
\begin{align*}
\sum_{\substack{k=0\\k\neq(p-1)/3}}^{(p-1)/2}\frac{\binom{2k}k}{3k+1}&=\sum_{\substack{k=0\\k\neq(p-1)/3}}^{(p-1)/2}\frac{\left(\frac12\right)_k\left(\frac13\right)_k}{(1)_k\left(\frac43\right)_k}4^k\\
&\equiv\sum_{\substack{k=0\\k\neq(p-1)/3}}^{(p-1)/2}\frac{\left(\frac{1-p}2\right)_k\left(\frac13-\frac{p}6\right)_k}{(1)_k\left(\frac43-\frac{2p}3\right)_k}4^k-3\sum_{k=(p+2)/3}^{(p-1)/2}\frac{\left(\frac{1-p}2\right)_k\left(\frac13-\frac{p}6\right)_k}{(1)_k\left(\frac43-\frac{2p}3\right)_k}4^k\pmod p.
\end{align*}
Thus, we only need to prove that
\begin{align}\label{prove}
\sum_{\substack{k=0\\k\neq(p-1)/3}}^{(p-1)/2}\frac{\left(\frac{1-p}2\right)_k\left(\frac13-\frac{p}6\right)_k}{(1)_k\left(\frac43-\frac{2p}3\right)_k}4^k\equiv-\frac32\sum_{k=(p+2)/3}^{(p-1)/2}\frac{\left(\frac{1}2\right)_k\left(\frac13\right)_k}{(1)_k\left(\frac43\right)_k}4^k\pmod p.
\end{align}
Set
$$
\sum_{\substack{k=0\\k\neq(p-1)/3}}^{(p-1)/2}\frac{\left(\frac{1-p}2\right)_k\left(\frac13-\frac{p}6\right)_k}{(1)_k\left(\frac43-\frac{2p}3\right)_k}4^k=\mathfrak{A}-\mathfrak{F},
$$
where
$$
\mathfrak{A}={}_{2}F_1\bigg[\begin{matrix}
\frac{1-p}2&\frac13-\frac{p}6\\
&\frac43-\frac{2p}3
\end{matrix}\bigg|\,4\bigg]
$$
$$\mathfrak{F}=\frac{\left(\frac{1-p}2\right)_{(p-1)/3}\left(\frac13-\frac{p}6\right)_{(p-1)/3}}{(1)_{(p-1)/3}\left(\frac43-\frac{2p}3\right)_{(p-1)/3}}4^{(p-1)/3}.$$
In view of \cite[15.8.1]{olbc-book-2010}, we have
$$
{}_{2}F_1\bigg[\begin{matrix}
a&b\\
&c
\end{matrix}\bigg|\,z\bigg]=(1-z)^{-a}{}_{2}F_1\bigg[\begin{matrix}
a&c-b\\
&c
\end{matrix}\bigg|\,\frac{z}{z-1}\bigg].
$$
Setting $a=\frac{1-p}2, b=\frac13-\frac{p}6, c=\frac43-\frac{2p}3$, we have
$$
\mathfrak{A}=(-3)^{(p-1)/2}{}_{2}F_1\bigg[\begin{matrix}
\frac{1-p}2&1-\frac{p}2\\
&\frac43-\frac{2p}3
\end{matrix}\bigg|\,\frac43\bigg]
$$
Set $n=\frac{p-1}6$ in Lemma \ref{Lem2f1}, $n$ is a nonnegative integer because of $p\equiv1\pmod3$, so we have
$$
{}_{2}F_1\bigg[\begin{matrix}
\frac{1-p}2&1-\frac{p}2\\
&\frac43-\frac{2p}3
\end{matrix}\bigg|\,\frac43\bigg]=\frac1{2^{(p-1)/3}}{}_{2}F_1\bigg[\begin{matrix}
\frac{1-p}6&\frac23-\frac{p}6\\
&\frac76-\frac{p}3
\end{matrix}\bigg|\,1\bigg].
$$
Substituting $m=\frac{p-1}6, b=\frac23-\frac{p}6, c=\frac76-\frac{p}3$ into \cite[15.8.6]{olbc-book-2010}, we have
$$
{}_{2}F_1\bigg[\begin{matrix}
\frac{1-p}6&\frac23-\frac{p}6\\
&\frac76-\frac{p}3
\end{matrix}\bigg|\,1\bigg]=\frac{\left(\frac23-\frac{p}6\right)_{(p-1)/6}}{\left(\frac76-\frac{p}3\right)_{(p-1)/6}}(-1)^{(p-1)/6}{}_{2}F_1\bigg[\begin{matrix}
\frac{1-p}6&\frac{p}6\\
&\frac12
\end{matrix}\bigg|\,1\bigg].
$$
Hence
$$
\mathfrak{A}=(-3)^{(p-1)/2}\frac1{2^{(p-1)/3}}\frac{\left(\frac23-\frac{p}6\right)_{(p-1)/6}}{\left(\frac76-\frac{p}3\right)_{(p-1)/6}}(-1)^{(p-1)/6}{}_{2}F_1\bigg[\begin{matrix}
\frac{1-p}6&\frac{p}6\\
&\frac12
\end{matrix}\bigg|\,1\bigg].
$$
Setting $n=\frac{p-1}6, b=\frac{p}6, c=\frac12$ in \cite[15.4.24]{olbc-book-2010}, we have
\begin{align*}
{}_{2}F_1\bigg[\begin{matrix}
\frac{1-p}6&\frac{p}6\\
&\frac12
\end{matrix}\bigg|\,1\bigg]=\frac{\left(\frac12-\frac{p}6\right)_{(p-1)/6}}{\left(\frac12\right)_{(p-1)/6}}.
\end{align*}
Notice that $(p-1)/2+(p-1)/6=2(p-1)/3$ is even, so
$$
\mathfrak{A}=3^{(p-1)/2}\frac1{2^{(p-1)/3}}\frac{\left(\frac23-\frac{p}6\right)_{(p-1)/6}}{\left(\frac76-\frac{p}3\right)_{(p-1)/6}}\frac{\left(\frac12-\frac{p}6\right)_{(p-1)/6}}{\left(\frac12\right)_{(p-1)/6}}.
$$
Now we calculate the right-side of (\ref{prove}),
\begin{align}\label{p+23}
&\sum_{k=(p+2)/3}^{(p-1)/2}\frac{\left(\frac{1}2\right)_k\left(\frac13\right)_k}{(1)_k\left(\frac43\right)_k}4^k=\sum_{k=(p+2)/3}^{(p-1)/2}\frac{\binom{2k}k}{3k+1}\equiv\sum_{k=(p+2)/3}^{(p-1)/2}\frac{\binom{(p-1)/2}k(-4)^k}{3k+1}\notag\\
&=\sum_{k=0}^{(p-7)/6}\frac{\binom{(p-1)/2}k(-4)^{(p-1)/2-k}}{3((p-1)/2-k)+1}\equiv-2(-1)^{p-1)/2}\sum_{k=0}^{(p-7)/6}\frac{\binom{2k}k}{(6k+1)(16)^k}\notag\\
&=-2(-1)^{p-1)/2}\sum_{k=0}^{(p-7)/6}\frac{\left(\frac12\right)_k\left(\frac16\right)_k}{(1)_k\left(\frac76\right)_k4^k}\equiv-2(-1)^{p-1)/2}\sum_{k=0}^{(p-7)/6}\frac{\left(\frac{1+p}2\right)_k\left(\frac{1-p}6\right)_k}{(1)_k\left(\frac76+\frac{p}3\right)_k4^k}\notag\\
&=-2(-1)^{p-1)/2}(\mathfrak{L}-\mathfrak{Q})\pmod p,
\end{align}
where
$$
\mathfrak{L}={}_{2}F_1\bigg[\begin{matrix}
\frac{1-p}6&\frac12+\frac{p}2\\
&\frac76+\frac{p}3
\end{matrix}\bigg|\,\frac14\bigg],\ \ \ \ \ \mathfrak{Q}=\frac{\left(\frac{1+p}2\right)_{(p-1)/6}\left(\frac{1-p}6\right)_{(p-1)/6}}{(1)_{(p-1)/6}\left(\frac76+\frac{p}3\right)_{(p-1)/6}}\left(\frac14\right)^{(p-1)/6}.
$$
Substituting $a=\frac{1-p}6, b=\frac{1+p}2, c=\frac76+\frac{p}3$ in \cite[15.8.1]{olbc-book-2010}, and then by using \cite[15.4.31]{olbc-book-2010} with $a=\frac{1-p}6$ we have
\begin{align*}
\mathfrak{L}=\left(\frac34\right)^{(p-1)/6}{}_{2}F_1\bigg[\begin{matrix}
\frac{1-p}6&\frac23-\frac{p}6\\
&\frac76+\frac{p}3
\end{matrix}\bigg|\,-\frac13\bigg]=\left(\frac34\right)^{(p-1)/6}\left(\frac89\right)^{(p-1)/3}\frac{\Gamma\left(\frac43\right)\Gamma\left(\frac76+\frac{p}3\right)}{\Gamma\left(\frac32\right)\Gamma\left(1+\frac{p}3\right)}.
\end{align*}
In view of \cite[Lemma 17,(3)]{lr}, we have
$$
\frac{\Gamma\left(\frac43\right)\Gamma\left(\frac76+\frac{p}3\right)}{\Gamma\left(\frac32\right)\Gamma\left(1+\frac{p}3\right)}=\frac3p\frac{\Gamma\left(\frac43\right)\Gamma\left(\frac76+\frac{p}3\right)}{\Gamma\left(\frac32\right)\Gamma\left(\frac{p}3\right)}=-\frac3p\frac{\Gamma_p\left(\frac43\right)\Gamma_p\left(\frac76+\frac{p}3\right)}{\Gamma_p\left(\frac32\right)\Gamma_p\left(\frac{p}3\right)}.
$$
So
\begin{align}\label{l}
\mathfrak{L}=-\frac{34^{(p-1)/3}}{p3^{(p-1)/2}}\frac{\Gamma_p\left(\frac43\right)\Gamma_p\left(\frac76+\frac{p}3\right)}{\Gamma_p\left(\frac32\right)\Gamma_p\left(\frac{p}3\right)}.
\end{align}
Thus, by (\ref{prove}, (\ref{p+23}) and (\ref{l}), we just need to prove that
\begin{align}\label{AFLQ}
\mathfrak{A}-\mathfrak{F}\equiv3(-1)^{(p-1)/2}(\mathfrak{L}-\mathfrak{Q})\pmod p.
\end{align}
By \cite[Lemma 17, (3)]{lr} we know that
\begin{align*}
\mathfrak{A}&=\frac{3^{\frac{p-1}2}}{2^{\frac{p-1}3}}\frac{\left(\frac23-\frac{p}6\right)_{(p-1)/6}}{\left(\frac76-\frac{p}3\right)_{(p-1)/6}}\frac{\left(\frac12-\frac{p}6\right)_{(p-1)/6}}{\left(\frac12\right)_{(p-1)/6}}=\frac6p\frac{3^{\frac{p-1}2}}{2^{\frac{p-1}3}}\frac{\Gamma_p\left(\frac12\right)\Gamma_p\left(\frac13\right)\Gamma_p\left(\frac76-\frac{p}3\right)\Gamma_p\left(\frac12\right)}{\Gamma_p\left(\frac23-\frac{p}6\right)\Gamma_p\left(\frac12+\frac{p}6\right)\Gamma_p\left(-\frac{p}6\right)\Gamma_p\left(\frac13+\frac{p}6\right)}.
\end{align*}
We know that for any $\alpha\in\mathbb{Z}_p$,
\begin{equation}\label{de}
\frac{\Gamma_p'(\alpha)}{\Gamma_p(\alpha)}\equiv \Gamma_p'(0)+H_{p-\langle-\alpha\rangle_p-1}\pmod{p},
\end{equation}
where $H_n=\sum_{k=1}^n\frac1k$ is the $n$th classic harmonic number.

\noindent So we have
$$
p2^{\frac{p-1}3}\mathfrak{A}\equiv6\cdot3^{\frac{p-1}2}\frac{\Gamma_p\left(\frac12\right)\Gamma_p\left(\frac13\right)\Gamma_p\left(\frac76\right)\Gamma_p\left(\frac12\right)}{\Gamma_p\left(\frac23\right)\Gamma_p\left(\frac12\right)\Gamma_p\left(0\right)\Gamma_p\left(\frac13\right)}\left(1-\frac{p}3H_{\frac{p-7}6}+\frac{p}6H_{\frac{p-1}2}\right)\pmod{p^2}.
$$
So by \cite[Definition 4]{lr}, we have
$$
p2^{(p-1)/3}\mathfrak{A}\equiv-(-3)^{(p-1)/2}\frac{\Gamma_p\left(\frac16\right)\Gamma_p\left(\frac13\right)}{\Gamma_p\left(\frac12\right)}\left(1-\frac{p}3H_{(p-1)/6}-2p+\frac{p}6H_{(p-1)/2}\right)\pmod{p^2},
$$
Similarly, we have
$$
p2^{\frac{p-1}3}\mathfrak{F}\equiv-2^{p-1}\frac{\Gamma_p\left(\frac16\right)\Gamma_p\left(\frac13\right)}{\Gamma_p\left(\frac12\right)}\left(1-\frac{p}6H_{\frac{p-1}6}-2p-\frac{5p}6H_{\frac{p-1}3}+\frac{p}2H_{\frac{p-1}2}\right)\pmod{p^2},
$$
$$
3p2^{\frac{p-1}3}(-1)^{\frac{p-1}2}\mathfrak{L}\equiv\frac{2^{p-1}}{(-3)^{\frac{p-1}2}}\frac{\Gamma_p\left(\frac16\right)\Gamma_p\left(\frac13\right)}{\Gamma_p\left(\frac12\right)}\left(1+\frac{p}3H_{\frac{p-1}6}+2p\right)\pmod{p^2},
$$
$$
3p2^{\frac{p-1}3}(-1)^{\frac{p-1}2}\mathfrak{Q}\equiv\frac{\Gamma_p\left(\frac16\right)\Gamma_p\left(\frac13\right)}{\Gamma_p\left(\frac12\right)}\left(1+\frac{2p}3H_{\frac{p-1}3}+\frac{p}3H_{\frac{p-1}6}-\frac{p}2H_{\frac{p-1}2}+2p\right)\pmod{p^2}.
$$
Therefore (\ref{AFLQ}) is equivalent to
\begin{align*}
&-(-3)^{\frac{p-1}2}\left(1-\frac{p}3H_{\frac{p-1}6}-2p+\frac{p}6H_{\frac{p-1}2}\right)+2^{p-1}\left(1-\frac{p}6H_{\frac{p-1}6}-2p-\frac{5p}6H_{\frac{p-1}3}+\frac{p}2H_{\frac{p-1}2}\right)\\
&\equiv\frac{2^{p-1}}{(-3)^{\frac{p-1}2}}\left(1+\frac{p}3+2p\right)-\left(1+\frac{2p}3H_{\frac{p-1}3}+\frac{p}3H_{\frac{p-1}6}-\frac{p}2H_{\frac{p-1}2}+2p\right)\pmod{p^2}.
\end{align*}
By Lemma \ref{L}, we just need to prove that
$$
2^{p-1}-(-3)^{(p-1)/2}-\frac{2^{p-1}}{(-3)^{(p-1)/2}}+1\equiv0\pmod{p^2}.
$$
By using Fermat little theorem and $\left(\frac{-3}p\right)=\left(\frac{p}3\right)=1$, we immediately get that
$$
2^{p-1}-(-3)^{\frac{p-1}2}-\frac{2^{p-1}}{(-3)^{\frac{p-1}2}}+1=\left(2^{p-1}-(-3)^{\frac{p-1}2}\right)\left(1-\frac1{(-3)^{\frac{p-1}2}}\right)\equiv 0\pmod{p^2}.
$$
Therefore the proof of Theorem \ref{Th3k1} is complete.\qed
\section{Proof of Theorem \ref{Thadam}}
\noindent{\it Proof of Theorem \ref{Thadam}}. Now $p\equiv1\pmod 3$, so $\left(\frac{p^a}3\right)=1$, by (\ref{st}) we have
$$
\sum_{k=1}^{p^a-1}\binom{2k}k\equiv0\pmod{p^2}.
$$
Thus we only need to prove that
\begin{equation*}
\sum_{k=(2p^a+1)/3}^{p^a-1}\binom{2k}k\equiv0\pmod{p^2}.
\end{equation*}
Let $k$ and $l$ be positive integers with $k+l=p^a$ and $0<l<p^a/2$. In view of \cite{ps-scm-2014}, we have
\begin{align}\label{l2l}
\frac{l}2\binom{2l}l=\frac{(2l-1)!}{(l-1)!^2}\not\equiv0\pmod{p^a}
\end{align}
and
\begin{align}\label{2k2l}
\binom{2k}k\equiv-p^a\frac{(l-1)!^2}{(2l-1)!}=-\frac{2p^a}{l\binom{2l}l}\pmod{p^2}.
\end{align}
So we have
$$
\sum_{k=(2p^a+1)/3}^{p-1}\binom{2k}k=\sum_{k=1}^{(p^a-1)/3}\binom{2p^a-2k}{p^a-k}\equiv-2p^a\sum_{k=1}^{(p^a-1)/3}\frac1{k\binom{2k}k}\pmod{p^2}.
$$
Hence we only need to show that
\begin{align}\label{p-13}
p^{a-1}\sum_{k=1}^{(p^a-1)/3}\frac1{k\binom{2k}k}\equiv0\pmod{p}.
\end{align}
It is easy to see that for $k=1,2,\ldots,(p^a-1)/2$,
\begin{align}\label{pa-12}
\frac{\binom{(p^a-1)/2}k}{\binom{2k}k/(-4)^k}=\frac{\binom{(p^a-1)/2}k}{\binom{1/2}k}=\prod_{j=0}^{k-1}\frac{(p^a-1)/2-j}{-1/2-j}=\prod_{j=0}^{k-1}\left(1-\frac{p^a}{2j+1}\right)\equiv1\pmod p.
\end{align}
This, with Fermat little theorem yields that
\begin{align*}
p^{a-1}\sum_{k=1}^{(p^a-1)/3}\frac1{k\binom{2k}k}&\equiv p^{a-1}\sum_{k=1}^{(p^a-1)/3}\frac1{k\binom{(p^a-1)/2}k(-4)^k}\equiv-2p^{a-1}\sum_{k=1}^{(p^a-1)/3}\frac1{\binom{(p^a-3)/2}{k-1}(-4)^k}\\
&=\frac12p^{a-1}\sum_{k=0}^{(p^a-4)/3}\frac1{\binom{(p^a-3)/2}k(-4)^k}\pmod p.
\end{align*}
Now we set $n=(p^a-1)/2, m=(p^a-1)/3, \lambda=-\frac14$, then
$$
\sum_{k=0}^{m-1}\frac{\lambda^k}{\binom{n-1}k}=\sum_{k=0}^{n-1}\frac{\lambda^k}{\binom{n-1}k}-\sum_{k=m}^{n-1}\frac{\lambda^k}{\binom{n-1}k}.
$$
So we only need to prove that
\begin{align}\label{n-1m}
p^{a-1}\sum_{k=0}^{n-1}\frac{\lambda^k}{\binom{n-1}k}\equiv p^{a-1}\sum_{k=m}^{n-1}\frac{\lambda^k}{\binom{n-1}k}\pmod p.
\end{align}
In view of \cite{SWZ}, we have
$$
\sum_{k=0}^{n-1}\frac{\lambda^k}{\binom{n-1}k}=n\sum_{k=0}^{n-1}\frac{\lambda^k}{(\lambda+1)^{k+1}}\sum_{i=0}^{n-1-k}\binom{n-1-k}i\frac{(-1)^i}{i+1}+\frac{n\lambda^n}{(\lambda+1)^{n+1}}\sum_{k=0}^{n-1}\frac{(\lambda+1)^{k+1}}{k+1}.
$$
It is easy to show that for each $0\leq k\leq n-1$
$$
\sum_{i=0}^{n-1-k}\binom{n-1-k}i\frac{(-1)^i}{i+1}=\int_0^1\sum_{i=0}^{n-1-k}\binom{n-1-k}i(-x)^idx=\int_0^1(1-x)^{n-1-k}dx=\frac1{n-k}.
$$
Hence
\begin{align*}
\sum_{k=0}^{n-1}\frac{\lambda^k}{\binom{n-1}k}&=n\sum_{k=0}^{n-1}\frac{\lambda^k}{(\lambda+1)^{k+1}(n-k)}+\frac{n\lambda^n}{(\lambda+1)^{n+1}}\sum_{k=0}^{n-1}\frac{(\lambda+1)^{k+1}}{k+1}\\
&=n\sum_{k=1}^{n}\frac{\lambda^{n-k}}{(\lambda+1)^{n-k+1}k}+\frac{n\lambda^n}{(\lambda+1)^{n+1}}\sum_{k=0}^{n-1}\frac{(\lambda+1)^{k+1}}{k+1}\\
&=\frac{n\lambda^n}{(\lambda+1)^{n+1}}\left(\sum_{k=1}^{n}\frac{(\lambda+1)^{k}}{k\lambda^k}+\sum_{k=1}^{n}\frac{(\lambda+1)^{k}}{k}\right).
\end{align*}
In the same way, we have
$$
\sum_{k=m}^{n-1}\frac{\lambda^k}{\binom{n-1}k}=n\sum_{k=0}^{n-1-m}\frac{\lambda^{m+k}}{(\lambda+1)^{k+1}}\sum_{i=0}^{n-1-m-k}\frac{(-1)^i\binom{n-1-m-k}i}{m+i+1}+\frac{n\lambda^n}{(\lambda+1)^{n+1}}\sum_{k=m}^{n-1}\frac{(\lambda+1)^{k+1}}{k+1}.
$$
It is easy to check that for each $0\leq k\leq n-1-m$
\begin{align*}
\sum_{i=0}^{n-1-m-k}\binom{n-1-m-k}i\frac{(-1)^i}{m+i+1}&=\int_0^1\sum_{i=0}^{n-1-m-k}\binom{n-1-m-k}i(-x)^ix^mdx\\
&=\int_0^1x^m(1-x)^{n-1-m-k}dx=B(m+1,n-m-k),
\end{align*}
where $B(P,Q)$ stands for the beta function. It is well known that the beta function relate to gamma function:
$$B(P,Q)=\frac{\Gamma(P)\Gamma(Q)}{\Gamma(P+Q)}.$$
So
$$
B(m+1,n-m-k)=\frac{\Gamma(m+1)\Gamma(n-m-k)}{\Gamma(n-k+1)}=\frac{m!(n-m-k-1)!}{(n-k)!}=\frac1{(m+1)\binom{n-k}{m+1}}.
$$
Therefore
\begin{align*}
\sum_{k=m}^{n-1}\frac{\lambda^k}{\binom{n-1}k}&=\frac{n}{m+1}\sum_{k=0}^{n-1-m}\frac{\lambda^{m+k}}{(\lambda+1)^{k+1}\binom{n-k}{m+1}}+\frac{n\lambda^n}{(\lambda+1)^{n+1}}\sum_{k=m}^{n-1}\frac{(\lambda+1)^{k+1}}{k+1}\\
&=\frac{n}{m+1}\sum_{k=m+1}^{n}\frac{\lambda^{m+n-k}}{(\lambda+1)^{n-k+1}\binom{k}{m+1}}+\frac{n\lambda^n}{(\lambda+1)^{n+1}}\sum_{k=m+1}^{n}\frac{(\lambda+1)^{k}}{k}\\
&=\frac{n\lambda^n}{(\lambda+1)^{n+1}}\left(\frac{\lambda^m}{m+1}\sum_{k=m+1}^{n}\frac{(\lambda+1)^{k}}{\lambda^{k}\binom{k}{m+1}}+\sum_{k=m+1}^{n}\frac{(\lambda+1)^{k}}{k}\right).
\end{align*}
By (\ref{n-1m}), we just need to show that
\begin{equation}\label{nm+1}
p^{a-1}\frac{\lambda^m}{m+1}\sum_{k=m+1}^{n}\frac{(\lambda+1)^{k}}{\lambda^{k}\binom{k}{m+1}}\equiv p^{a-1}\sum_{k=1}^{n}\frac{(\lambda+1)^{k}}{k\lambda^k}+p^{a-1}\sum_{k=1}^{m}\frac{(\lambda+1)^{k}}{k}\pmod p.
\end{equation}
It is obvious that
\begin{align*}
\sum_{k=m+1}^{n}\frac{(\lambda+1)^{k}}{\lambda^{k}\binom{k}{m+1}}=\sum_{k=m+1}^n\frac{(-3)^k}{\binom{k}{m+1}}=\sum_{k=m+1}^n\frac1{\binom{k}{m+1}}\sum_{j=0}^k\binom{k}j(-4)^j=\mathfrak{B}+\mathfrak{C},
\end{align*}
where
$$
\mathfrak{B}=\sum_{j=m+1}^n(-4)^j\sum_{k=j}^n\frac{\binom{k}j}{\binom{k}{m+1}},\ \ \ \ \ \ \ \mathfrak{C}=\sum_{j=0}^{m}(-4)^j\sum_{k=m+1}^n\frac{\binom{k}j}{\binom{k}{m+1}}.
$$
By the following transformation
$$
\frac{\binom{k}j}{\binom{k}{m+1}}=\frac{k!(m+1)!(k-m-1)!}{j!(k-j)!k!}=\frac{(m+1)!(k-m-1)!(j-m-1)!}{j!(k-j)!(j-m-1)!}=\frac{\binom{k-m-1}{j-m-1}}{\binom{j}{m+1}}.
$$
We have
\begin{align*}
\mathfrak{B}=\sum_{j=m+1}^n(-4)^j\sum_{k=j}^n\frac{\binom{k-m-1}{j-m-1}}{\binom{j}{m+1}}=\sum_{j=m+1}^n\frac{(-4)^j}{\binom{j}{m+1}}\sum_{k=0}^{n-j}\binom{k+j-m-1}{j-m-1}.
\end{align*}
By \cite[(1.48)]{g-online}, we have
$$
\mathfrak{B}=\sum_{j=m+1}^n\frac{(-4)^j}{\binom{j}{m+1}}\binom{n-m}{j-m}.
$$
It is easy to show that
$$
\frac{\binom{n-m}{j-m}}{\binom{j}{m+1}}=\frac{(n-m)!(m+1)!(j-m-1)!}{j!(n-j)!(j-m)!}=\frac{n+1}{j-m}\frac{\binom{n}j}{\binom{n+1}{m+1}}.
$$
Thus,
$$\mathfrak{B}=\frac{n+1}{\binom{n+1}{m+1}}\sum_{j=m+1}^n\frac{(-4)^j}{j-m}\binom{n}{j}.$$
Now we calculate $\mathfrak{C}$. First we have the following transformation
$$
\frac{\binom{k}j}{\binom{k}{m+1}}=\frac{k!(m+1)!(k-m-1)!}{j!(k-j)!k!}=\frac{(m+1)!(k-m-1)!(m-j+1)!}{j!(k-j)!(m-j+1)!}=\frac{\binom{m+1}{j}}{\binom{k-j}{m-j+1}}.
$$
Thus,
$$
\mathfrak{C}=\sum_{j=0}^m\binom{m+1}j(-4)^j\sum_{k=m+1}^n\frac1{\binom{k-j}{m-j+1}}=\sum_{j=0}^m\binom{m+1}j(-4)^j\sum_{k=0}^{n-m-1}\frac1{\binom{k+m+1-j}{m-j+1}}.
$$
By using package \texttt{Sigma}, we find the following identity,
$$
\sum_{k=0}^N\frac1{\binom{k+i}i}=\frac{i}{i-1}-\frac{N+1}{(i-1)\binom{N+i}N}.
$$
Substituting $N=n-m-1, i=m+1-j$ into the above identity, we have
$$
\mathfrak{C}=\sum_{j=0}^{m-1}\binom{m+1}j(-4)^j\left(\frac{m+1-j}{m-j}-\frac{n-m}{(m-j)\binom{n-j}{n-m-1}}\right)+(m+1)(-4)^m\sum_{k=1}^{n-m}\frac1k.
$$
It is easy to check that
$$
\frac{(n-m)\binom{m+1}j}{\binom{n-j}{n-m-1}}=\frac{(m+1)!((n-m)!(m+1-j)!}{j!(n-j)!(m+1-j)!}=\frac{(m+1)!((n-m)!}{j!(n-j)!}=\frac{(n+1)\binom{n}j}{\binom{n+1}{m+1}}.
$$
Therefore
$$
\mathfrak{C}=(m+1)\sum_{j=0}^{m-1}\binom{m}j\frac{(-4)^j}{m-j}-\frac{n+1}{\binom{n+1}{m+1}}\sum_{j=0}^{m-1}\binom{n}j\frac{(-4)^j}{m-j}+(m+1)(-4)^m\sum_{k=1}^{n-m}\frac1k.
$$
Hence
\begin{equation*}
\mathfrak{B}+\mathfrak{C}=(m+1)\sum_{j=0}^{m-1}\binom{m}j\frac{(-4)^j}{m-j}+\frac{n+1}{\binom{n+1}{m+1}}\sum_{\substack{j=0\\j\neq m}}^{n}\binom{n}j\frac{(-4)^j}{j-m}+(m+1)(-4)^m\sum_{k=1}^{n-m}\frac1k.
\end{equation*}
That is
\begin{equation}\label{b+c}
\frac{\lambda^m}{m+1}(\mathfrak{B}+\mathfrak{C})=\lambda^m\sum_{j=0}^{m-1}\binom{m}j\frac{(-4)^j}{m-j}+\frac{\lambda^m}{\binom{n}{m}}\sum_{\substack{j=0\\j\neq m}}^{n}\binom{n}j\frac{(-4)^j}{j-m}+H_{n-m}.
\end{equation}
In view of (\ref{pa-12}), we have
\begin{align*}
\sum_{k=1}^n\frac{(-3)^k}k&=\int_0^1\sum_{k=1}^n(-3)^kx^{k-1}dx=-3\int_0^1\sum_{k=0}^{n-1}(-3x)^kdx=-3\int_0^1\frac{1-(-3x)^n}{1+3x}dx\\
&=3\int_0^1\sum_{k=1}^n\binom{n}k(-1)^k(1+3x)^{k-1}dx=\int_1^4\sum_{k=1}^n(-1)^ky^{k-1}dy\\
&=\sum_{k=1}^n\binom{n}k(-1)^k\frac{4^k-1}k\equiv\sum_{k=1}^n\frac{\binom{2k}k}k-\sum_{k=1}^n\binom{n}k\frac{(-1)^k}k\pmod p
\end{align*}
and
\begin{align*}
\sum_{k=1}^n\binom{n}k\frac{(-1)^k}k&=\int_0^1\sum_{k=1}^n\binom{n}k(-1)^kx^{k-1}dx=\int_0^1\frac{(1-x)^n-1}xdx=\int_0^1\frac{y^n-1}{1-y}dy\\
&=-\int_0^1\sum_{k=0}^{n-1}y^kdy=-\sum_{k=0}^{n-1}\frac1{k+1}=-\sum_{k=1}^{n}\frac1{k}.
\end{align*}
In view of \cite[(1.20)]{st-aam-2010}, and by (\ref{l2l}), (\ref{2k2l}) we have
\begin{align}\label{pa-1n}
p^{a-1}\sum_{k=1}^n\frac{\binom{2k}k}k\equiv p^{a-1}\sum_{k=1}^{p^a-1}\frac{\binom{2k}k}k\equiv0\pmod p.
\end{align}
This, with \cite[(1.20)]{st-aam-2010} yields that
$$p^{a-1}\sum_{k=1}^n\frac{(\lambda+1)^k}{k\lambda^k}=p^{a-1}\sum_{k=1}^n\frac{(-3)^k}k\equiv p^{a-1}H_n\pmod p.$$
On the other hand, by \cite[(1.48)]{g-online} we have
\begin{align*}
\sum_{k=1}^m\frac{(\lambda+1)^k-1}{k}&=\sum_{k=1}^m\frac{3^k-1}{k4^k}=\sum_{k=1}^m\frac1k\sum_{j=1}^k\binom{k}j\frac1{(-4)^j}=\sum_{j=1}^m\frac1{j(-4)^j}\sum_{k=j}^m\binom{k-1}{j-1}\\
&=\sum_{j=1}^m\frac1{j(-4)^j}\binom{m}j=\frac1{(-4)^m}\sum_{j=0}^{m-1}\frac{(-4)^j}{m-j}\binom{m}j.
\end{align*}
Hence
$$
\sum_{k=1}^m\frac{(\lambda+1)^k}{k}=\frac1{(-4)^m}\sum_{j=0}^{m-1}\frac{(-4)^j}{m-j}\binom{m}j+H_m.
$$
So modulo $p$ we have
$$
p^{a-1}\sum_{k=1}^m\frac{(\lambda+1)^k}{k}+p^{a-1}\sum_{k=1}^n\frac{(\lambda+1)^k}{k\lambda^k}\equiv p^{a-1}\lambda^m\sum_{j=0}^{m-1}\frac{(-4)^j}{m-j}\binom{m}j+p^{a-1}(H_m+H_n).
$$
It is obvious that
$$
p^{a-1}H_n=p^{a-1}\sum_{k=1}^n\frac1k\equiv p^{a-1}\sum_{j=1}^{(p-1)/2}\frac1{jp^{a-1}}=H_{(p-1)/2}\pmod p
$$
and $p^{a-1}H_m\equiv H_{\lfloor p/3\rfloor}\pmod p$, $p^{a-1}H_{n-m}\equiv H_{\lfloor p/6\rfloor}\pmod p$.\\
This, with (\ref{nm+1}), (\ref{b+c}) and Lemma \ref{L} yields that we only need to prove that
$$
\frac{p^{a-1}}{\binom{n}{m}}\sum_{\substack{j=0\\j\neq m}}^{n}\binom{n}j\frac{(-4)^j}{j-m}\equiv0\pmod p.
$$
Now $p\equiv1\pmod3$, so by \cite[Lemma 17, (2)]{lr}, we can deduce that $p\nmid\binom{n}{m}.$ So we only need to prove that
$$
p^{a-1}\sum_{\substack{j=0\\j\neq m}}^{n}\binom{n}j\frac{(-4)^j}{j-m}\equiv0\pmod p.
$$
It is obvious that
\begin{align}\label{pa-13k1}
p^{a-1}\sum_{\substack{j=0\\j\neq m}}^{n}\binom{n}j\frac{(-4)^j}{j-m}\equiv3p^{a-1}\sum_{\substack{j=0\\j\neq m}}^{n}\binom{n}j\frac{(-4)^j}{3j+1}\pmod p.
\end{align}
In view of (\ref{pa-13k1}), we know that There are only the items $3j+1=p^{a-1}(3k+1)$ with $k=0,1,\ldots,(p-1)/2$ and $k\neq (p-1)/3$, so by Fermat little theorem and Lucas congruence, we have
\begin{align*}
&p^{a-1}\sum_{\substack{j=0\\j\neq m}}^{n}\frac{\binom{n}j(-4)^j}{3j+1}\equiv\sum_{\substack{k=0\\k\neq (p-1)/3}}^{(p-1)/2}\frac{\binom{n}{kp^{a-1}+\frac{p^{a-1}-1}3}(-4)^{kp^{a-1}+\frac{p^{a-1}-1}3}}{3k+1}\\
&\equiv(-4)^{\frac{p^{a-1}-1}3}\binom{\frac{p^{a-1}-1}2}{\frac{p^{a-1}-1}3}\sum_{\substack{k=0\\k\neq (p-1)/3}}^{(p-1)/2}\frac{\binom{n}{k}(-4)^k}{3k+1}\pmod{p}.
\end{align*}
By Theorem \ref{Th3k1}, we immediately get the desired reslut.\\
Therefore the proof of Theorem \ref{Thadam} is complete.\qed

\vskip 3mm \noindent{\bf Acknowledgments.}
The author is funded by the Startup Foundation for Introducing Talent of Nanjing University of Information Science and Technology (2019r062).

\end{document}